\documentclass[12pt]{amsart}

\usepackage[utf8]{inputenc}
\usepackage[T1]{fontenc}
\usepackage[english]{babel}
\usepackage{amsmath,amssymb,amsthm}
\usepackage[margin=1.2in]{geometry}
\usepackage{hyperref}
\usepackage{enumitem}
\usepackage{xcolor}
\usepackage{tikz}
\usetikzlibrary{decorations.pathreplacing,patterns}

\definecolor{ammblue}{HTML}{0A4EB0}
\definecolor{indigo}{HTML}{1B2A6B}

\hypersetup{colorlinks,linkcolor=indigo,citecolor=indigo,urlcolor=ammblue,
  pdftitle={A Random Approach to the Multibonacci Sequence},
  pdfauthor={Belbachir, Zeggada}}

\theoremstyle{plain}
\newtheorem{theorem}{Theorem}[section]
\newtheorem{corollary}[theorem]{Corollary}
\theoremstyle{definition}
\newtheorem{remark}[theorem]{Remark}

\begin{document}

\title{A Random Approach to the Multibonacci Sequence}

\author{Hac\`ene Belbachir}
\address{USTHB, Faculty of Mathematics, RECITS Laboratory,
  Po.\ Box~32, El~Alia, 16111 Bab~Ezzouar, Algiers, Algeria}
\email{hbelbachir@usthb.dz}

\author{Hamza Zeggada}
\address{USTHB, Faculty of Mathematics, RECITS Laboratory,
  Po.\ Box~32, El~Alia, 16111 Bab~Ezzouar, Algiers, Algeria}
\email{hzeggada@usthb.dz}

\subjclass[2020]{Primary 60C05, 11B39;\quad Secondary 05B45, 11B50, 60E05}

\keywords{multibonacci sequence, random tiling, $k$-ominoes,
  geometric distribution, expected value}

\date{\today}

\begin{abstract}
This paper presents a random approach to the multibonacci sequence.
We generalise the model introduced by Benjamin, Levin, Mahlburg, and
Quinn, which is based on a random tiling method using dominoes and
squares that leads to the Fibonacci sequence, and which was extended
to the tribonacci case in a previous work by the authors.
Our approach employs tiling with linear $k$-ominoes, $k=1,\ldots,s$,
combined with specific colouring, to generate a weighted multibonacci
sequence.
For a natural random variable~$X$ defined by this model, we establish
the distribution of $X$ in terms of multibonacci numbers and compute
$\mathbb{E}[X] = 2^{s+1}-3$.
\end{abstract}

\maketitle

\section{Introduction}

In their original work~\cite{Benjamin2000}, Benjamin, Levin, Mahlburg,
and Quinn established a probabilistic model based on random tilings of
an infinite board, which naturally generates Fibonacci identities.
In~\cite{Zeggada2024}, the present authors extended this model to
produce tribonacci identities.
The aim of the current paper is to generalise both results to the
multibonacci (or $s$-bonacci) sequence, for any integer $s \geq 2$.

The model introduced in this study generates random tilings of length
$n$ depending only on $n$.

To start, we provide a combinatorial interpretation of multibonacci
numbers. Let $c_n$ denote the number of compositions of $n$ using
parts in $\{1,2,\ldots,s\}$.
We observe that $c_n = U_{n+1}$, where $U_n$ denotes the $n$-th
multibonacci number.
Specifically, $c_1 = 1 = U_2$, $c_2 = 2 = U_3$
(compositions $1{+}1$ and $2$), $c_3 = 4 = U_4$
(compositions $1{+}1{+}1$, $1{+}2$, $2{+}1$, $3$), and so on.
Here $c_s$ is the sum of all preceding terms from $c_1 = U_2$ to
$c_{s-1} = U_s$, and for $n \geq s$ one has
$c_n = c_{n-1} + c_{n-2} + \cdots + c_{n-s}$.
Hence, for $n \geq 1$, the multibonacci number $U_n$ can be defined
combinatorially as the number of ways to tile a board of length $n-1$
using $1$-ominoes, $2$-ominoes, $\ldots$, up to $s$-ominoes.

\begin{remark}
The values $c_1=1=U_2$ and $c_2=2=U_3$ are valid for all $s \geq 2$.
Note that $c_3 = 4 = U_4$ holds for $s \geq 3$; for $s=2$ (Fibonacci)
one has $c_3 = 3 = U_4$ since only parts $1$ and $2$ are allowed,
giving compositions $1{+}1{+}1$, $1{+}2$, $2{+}1$.
\end{remark}

An explicit formula is given in~\cite{Belbachir2006}: $U_0 = 0$ and,
for $n \geq 0$,
\begin{equation}\label{eq:explicit}
  U_{n+1}
  = \sum_{\substack{i_1+2i_2+\cdots+si_s = n}}
    \binom{i_1+i_2+\cdots+i_s}{i_1,\,i_2,\,\ldots,\,i_s}.
\end{equation}

\section{Main results}

The model involves an infinite board containing cells numbered
$1, 2, \ldots$\,. Each cell is independently coloured either black
or white, with probability $1/2$ for each colour. For any colouring
of the first $n$ cells, the probability of that specific colouring
is $(1/2)^n$.

An infinite tiling is defined as coloured strings of black and white
cells of varying lengths. An example of such a board is given in
Figure~\ref{fig:board}.

\begin{figure}[h]
\centering
\begin{tikzpicture}[scale=0.75]
  \fill[white](0,0)rectangle(1,1);\draw(0,0)rectangle(1,1);
  \node[below=4pt] at (0.5,0){\tiny $1$-omino};
  \fill[gray!60](1.8,0)rectangle(2.8,1);\draw(1.8,0)rectangle(2.8,1);
  \fill[white](2.8,0)rectangle(3.8,1);\draw(2.8,0)rectangle(3.8,1);
  \node[below=4pt] at (2.8,0){\tiny $2$-omino};
  \fill[gray!60](4.6,0)rectangle(5.6,1);\draw(4.6,0)rectangle(5.6,1);
  \fill[gray!60](5.6,0)rectangle(6.6,1);\draw(5.6,0)rectangle(6.6,1);
  \fill[white](6.6,0)rectangle(7.6,1);\draw(6.6,0)rectangle(7.6,1);
  \node[below=4pt] at (5.6,0){\tiny $3$-omino};
  \node at (8.6,0.5){\large$\cdots$};
  \fill[gray!60](9.8,0)rectangle(10.8,1);\draw(9.8,0)rectangle(10.8,1);
  \fill[gray!60](10.8,0)rectangle(11.8,1);\draw(10.8,0)rectangle(11.8,1);
  \node[white,font=\scriptsize] at (11.3,0.5){$\cdots$};
  \fill[gray!60](11.8,0)rectangle(12.8,1);\draw(11.8,0)rectangle(12.8,1);
  \fill[white](12.8,0)rectangle(13.8,1);\draw(12.8,0)rectangle(13.8,1);
  \node[below=4pt] at (11.8,0){\tiny $s$-omino};
  \fill[gray!60](14.6,0)rectangle(15.6,1);\draw(14.6,0)rectangle(15.6,1);
  \fill[gray!60](15.6,0)rectangle(16.6,1);\draw(15.6,0)rectangle(16.6,1);
  \node[white,font=\scriptsize] at (16.1,0.5){$\cdots$};
  \fill[gray!60](16.6,0)rectangle(17.6,1);\draw(16.6,0)rectangle(17.6,1);
  \fill[gray!60](17.6,0)rectangle(18.6,1);\draw(17.6,0)rectangle(18.6,1);
  \node[below=4pt] at (16.6,0){\tiny $(s{+}1)$-omino};
\end{tikzpicture}
\caption{Available tiles for the tiling of cells $1$ to $n-1$:
a $j$-omino ($j=1,\ldots,s$) consists of $j-1$ black cells followed
by one white cell; the $(s+1)$-omino is entirely black.
}
\label{fig:tiles}
\end{figure}

Let $X$ be the random variable denoting the location of the end of
the first black string of length of the form $sk+(s-1)$, where $k$
is a non-negative integer. In Figure~\ref{fig:board}, with $k=2$,
the first such string has length $3s-1$ and $X = 3s-1$.

\begin{theorem}\label{thm:distribution}
For $n \geq s-1$, the probability that $X = n$ is
\begin{equation}\label{eq:proba}
  P(X=n) = \frac{U_{n-s+2}}{2^{n+1}}.
\end{equation}
As a consequence,
\begin{equation}\label{eq:identity}
  \sum_{n \geq s-1} \frac{U_{n-s+2}}{2^n} = 2.
\end{equation}
\end{theorem}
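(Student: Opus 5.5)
The plan is a direct counting argument. Each colouring of cells $1,\dots,n+1$ has probability $2^{-(n+1)}$, and the event $\{X=n\}$ depends only on these cells. So it suffices to show that the number of colourings of cells $1,\dots,n+1$ realising $X=n$ is $U_{n-s+2}$, i.e.\ the number of tilings of a board of length $n-s+1$ by $1$-, $2$-, $\dots$, $s$-ominoes.

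First I would characterise the event. $X=n$ means three things:
\begin{itemize}
\item cell $n+1$ is white;
\item the maximal black string ending at cell $n$ has length $sk+(s-1)$ for some $k\ge 0$;
\item every earlier maximal black string, each of which is followed by a white cell, has length $\not\equiv s-1 \pmod s$.
\end{itemize}
In particular, the last $s$ cells $n-s+2,\dots,n+1$ are forced to be $s-1$ blacks followed by one white (an $s$-omino in the sense of Figure~\ref{fig:tiles}). The question is then which strings may occupy the remaining prefix of cells $1,\dots,n-s+1$.

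Next I would build a bijection between these prefixes and tilings of length $n-s+1$ that use two kinds of tile:
\begin{itemize}
\item the $j$-ominoes for $j=1,\dots,s-1$, i.e.\ $j-1$ black cells followed by one white cell;
\item the all-black block of length $s$.
\end{itemize}
These tiles have sizes $1,\dots,s$, so the tilings are counted by $c_{n-s+1}=U_{n-s+2}$.

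\emph{From prefix to tiling.} Cut the prefix into maximal segments of the form black$^r$ white, together with a possibly white-free final black segment. A segment black$^r$white with $r=sq+t$ and $0\le t\le s-2$ is cut greedily into $q$ black $s$-blocks followed by a $(t+1)$-omino. The constraint $t\ne s-1$ on earlier strings is exactly what makes this possible.

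\emph{The final black segment.} This segment merges with the forced $s-1$ black cells at the end, so its length must itself be a multiple of $s$, say $sk$. This matches the total length $sk+s-1$ of the final black string, and the segment is cut into $k$ black $s$-blocks.

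\emph{From tiling to prefix.} Conversely, any tiling concatenated with the final $s$-omino yields a valid colouring. Its completed black strings have lengths $\equiv t\le s-2 \pmod s$, and its last black string has length $\equiv s-1 \pmod s$ and ends at $n$ with cell $n+1$ white.

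The main obstacle I expect is checking uniqueness and well-definedness of this decomposition carefully. Greedy cutting of each maximal black run into $s$-blocks, then a residual short tile, is unambiguous because white cells appear only at the ends of the short tiles. The edge case $n=s-1$, an empty prefix, gives $U_1=1$, consistent with $X=s-1$ occurring exactly when cells $1,\dots,s-1$ are black and cell $s$ is white. This gives \eqref{eq:proba}.

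For \eqref{eq:identity}, I would show that $X<\infty$ almost surely. The pattern ``$s-1$ blacks then a white'' occurs in disjoint windows of length $s$, independently, each with probability $2^{-s}$. Hence almost surely some window $[ms+1,(m+1)s]$ shows it. Take the first such window: its black cells extend backwards into a maximal run of length $sq+s-1$ or some other length. In the latter case a short extra step is needed, but more simply I would use that the patterns white, black$^{s-1}$, white on disjoint windows of length $s+1$ occur almost surely; each such occurrence produces a qualifying string, so $X$ is finite. Then $\sum_{n\ge s-1}P(X=n)=1$, and multiplying \eqref{eq:proba} by $2$ and summing gives \eqref{eq:identity}.
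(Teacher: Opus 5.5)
Your proof is correct and follows essentially the paper's route. You force the last $s$ cells to be $s-1$ blacks followed by a white, then count the admissible prefixes of length $n-s+1$ as tilings by pieces of lengths $1,\dots,s$, which gives $U_{n-s+2}$. Your bijection in fact uses the right tile set ($j$-ominoes for $j\le s-1$ plus an all-black $s$-block), whereas the paper's proof lists $j$-ominoes up to $j=s$ together with an all-black $(s+1)$-omino, a set that admits forbidden runs of length $s-1$ and is not counted by $U_n$; you also supply the almost-sure finiteness of $X$ that the paper uses tacitly when deducing \eqref{eq:identity}.
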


\begin{proof}
A tiling is categorised as having $X = n$ if and only if the
following conditions are simultaneously satisfied:
\begin{enumerate}[label=(\roman*),leftmargin=2em]
  \item cells $n, n+1, \ldots, n+s-2$ are all black
        (probability $(1/2)^{s-1}$);
  \item cell $n+s-1$ is white (probability $1/2$);
  \item cells $1, 2, \ldots, n-1$ are tiled using the tiles of
        Figure~\ref{fig:tiles}: $j$-ominoes for $j = 1, \ldots, s$
        ($j-1$ black cells followed by one white cell) and the
        $(s+1)$-omino (entirely black).
\end{enumerate}
The number of valid tilings of a strip of length $n-1$ under these
rules equals $U_n$, since the count satisfies the same recurrence
as $(U_n)$ with matching base cases. Each specific tiling of cells
$1$ to $n-1$ has probability $(1/2)^{n-1}$. Combining the three
conditions:
\[
  P(X = n+s-2)
  = U_n \cdot \frac{1}{2^{n-1}} \cdot \frac{1}{2^{s-1}} \cdot \frac{1}{2}
  = \frac{U_n}{2^{n+s-1}}.
\]
Setting $m = n+s-2$ (so $n = m-s+2$) and renaming $m$ as $n$ gives
formula~\eqref{eq:proba}. Identity~\eqref{eq:identity} then follows
from $\sum_{n \geq s-1} P(X=n) = 1$.
\end{proof}

\begin{figure}[h]
\centering
\begin{tikzpicture}[scale=0.68]
  \fill[white](0,6.5)rectangle(7,7.5);\draw(0,6.5)rectangle(7,7.5);
  \foreach \x in {1,2,3,4,5,6}{\draw(\x,6.5)--(\x,7.5);}
  \fill[white](0,5.2)rectangle(3,6.2);\draw(0,5.2)rectangle(3,6.2);
  \foreach \x in {1,2}{\draw(\x,5.2)--(\x,6.2);}
  \fill[gray!60](3,5.2)rectangle(4,6.2);\draw(3,5.2)rectangle(4,6.2);
  \fill[white](4,5.2)rectangle(7,6.2);\draw(4,5.2)rectangle(7,6.2);
  \foreach \x in {5,6}{\draw(\x,5.2)--(\x,6.2);}
  \fill[white](0,3.9)rectangle(2,4.9);\draw(0,3.9)rectangle(2,4.9);
  \draw(1,3.9)--(1,4.9);
  \fill[gray!60](2,3.9)rectangle(7,4.9);\draw(2,3.9)rectangle(7,4.9);
  \foreach \x in {3,4,5,6}{\draw(\x,3.9)--(\x,4.9);}
  \fill[white](0,2.6)rectangle(1,3.6);\draw(0,2.6)rectangle(1,3.6);
  \fill[gray!60](1,2.6)rectangle(2,3.6);\draw(1,2.6)rectangle(2,3.6);
  \fill[white](2,2.6)rectangle(3,3.6);\draw(2,2.6)rectangle(3,3.6);
  \fill[gray!60](3,2.6)rectangle(4,3.6);\draw(3,2.6)rectangle(4,3.6);
  \fill[white](4,2.6)rectangle(5,3.6);\draw(4,2.6)rectangle(5,3.6);
  \fill[gray!60](5,2.6)rectangle(6,3.6);\draw(5,2.6)rectangle(6,3.6);
  \fill[white](6,2.6)rectangle(7,3.6);\draw(6,2.6)rectangle(7,3.6);
  \fill[gray!60](0,1.3)rectangle(1,2.3);\draw(0,1.3)rectangle(1,2.3);
  \fill[white](1,1.3)rectangle(2,2.3);\draw(1,1.3)rectangle(2,2.3);
  \fill[gray!60](2,1.3)rectangle(7,2.3);\draw(2,1.3)rectangle(7,2.3);
  \foreach \x in {3,4,5,6}{\draw(\x,1.3)--(\x,2.3);}
  \fill[gray!60](0,0)rectangle(6,1);\draw(0,0)rectangle(6,1);
  \foreach \x in {1,2,3,4,5}{\draw(\x,0)--(\x,1);}
  \fill[white](6,0)rectangle(7,1);\draw(6,0)rectangle(7,1);
  \foreach \y in {0, 1.3, 2.6, 3.9, 5.2, 6.5}{
    \fill[gray!60](7,\y)rectangle(8,\y+1);\draw(7,\y)rectangle(8,\y+1);
    \fill[gray!60](8,\y)rectangle(9,\y+1);\draw(8,\y)rectangle(9,\y+1);
    \fill[gray!60](9,\y)rectangle(10,\y+1);\draw(9,\y)rectangle(10,\y+1);
    \fill[white](10,\y)rectangle(11,\y+1);\draw(10,\y)rectangle(11,\y+1);
    \node at (11.8,\y+0.5){\large$\cdots$};
  }
  \draw[dashed,gray!70](7,-0.2)--(7,8.6);
  \foreach \x/\lbl in {
    0.5/1,1.5/2,2.5/3,3.5/4,4.5/5,5.5/6,6.5/7,
    7.5/8,8.5/9,9.5/10,10.5/11}{
    \node[above=1pt,font=\tiny] at (\x,7.5){$\lbl$};
  }
  \draw[decorate,decoration={brace,amplitude=4pt,mirror},thick]
    (7,-0.25)--(10,-0.25);
  \node[below=5pt] at (8.5,-0.25){\tiny\shortstack{black\\(imposed)}};
  \draw[decorate,decoration={brace,amplitude=4pt,mirror},thick]
    (10,-0.25)--(11,-0.25);
  \node[below=5pt] at (10.5,-0.25){\tiny\shortstack{white\\(imposed)}};
  \draw[decorate,decoration={brace,amplitude=4pt},thick]
    (0,8.55)--(7,8.55);
  \node[above=4pt] at (3.5,8.55){\tiny freely tiled ($U_8=56$ tilings)};
\end{tikzpicture}
\caption{The $U_8 = 56$ possible tilings of cells $1$--$7$ for
$s=4$, $k=2$, $X=11$ (6 shown): each tiling uses only
$j$-ominoes for $j=1,\ldots,4$ (see Figure~\ref{fig:tiles}),
combined with cells $8$--$10$ black and cell $11$ white,
giving the event $\{X=11=3s-1\}$.
The pattern $\mathrm{W\,B\,B\,B\,B\,W}$
(white from the preceding tile, $4$ consecutive black cells,
white from the following tile) cannot appear in the free cells.}
\label{fig:board}
\end{figure}

\begin{remark}\label{rem:tiling}
The cells $1, 2, \ldots, n-1$ in condition~(iii) are not coloured
independently cell by cell. They are \emph{tiled} using the
$j$-ominoes for $j=1,\ldots,s$ (each consisting of $j-1$ black cells
followed by one white cell). The number of valid tilings of a strip
of length $n-1$ with these tiles equals $U_n$, the $n$-th
multibonacci number, by the recurrence satisfied by $(U_n)$.
In particular, for $s=4$ and $n-1=7$, there are exactly
$U_8 = 56$ tilings, as illustrated in Figure~\ref{fig:board}.
\end{remark}

\begin{theorem}\label{thm:expectation}
The expected value of $X$ is
\begin{equation}\label{eq:EX}
  \mathbb{E}[X]
  = \sum_{n \geq s-1} n\,\frac{U_{n-s+2}}{2^{n+1}}
  = 2^{s+1} - 3.
\end{equation}
In particular, for $s = 2$ (Fibonacci), $\mathbb{E}[X] = 5$;
for $s = 3$ (tribonacci), $\mathbb{E}[X] = 13$;
for $s = 4$ (quadribonacci), $\mathbb{E}[X] = 29$.
\end{theorem}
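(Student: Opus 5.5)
We compute the series in \eqref{eq:EX} directly, using the ordinary generating function of the multibonacci numbers. Set $m=n-s+2$, so that $m\ge 1$. The expectation becomes
\[
  \mathbb{E}[X]=\frac{1}{2^{s-1}}\sum_{m\ge 1}(m+s-2)\,\frac{U_m}{2^{m}}
  =\frac{1}{2^{s-1}}\Bigl(xG'(x)+(s-2)G(x)\Bigr)\Big|_{x=1/2},
\]
where $G(x)=\sum_{m\ge 0}U_m x^m$.

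From $U_0=0$, $U_1=1$ and the recurrence $U_m=U_{m-1}+\cdots+U_{m-s}$ (with $U_j=0$ for $j<0$), we get the standard rational form
\[
  G(x)=\frac{x}{D(x)},\qquad D(x)=1-x-x^2-\cdots-x^s .
\]
Before evaluating at $x=1/2$, we justify convergence. The dominant root $\alpha$ of $t^s=t^{s-1}+\cdots+1$ satisfies $\alpha<2$, since at $t=2$ the left side exceeds the right side ($2^s>2^s-1$). Hence $U_m=O(\alpha^m)$, the radius of convergence of $G$ exceeds $1/2$, and term-by-term differentiation at $x=1/2$ is legitimate. This convergence check is the only point needing care; everything else is finite geometric-sum algebra.

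Next we evaluate $G$ at $x=1/2$. We have $D(1/2)=1-\sum_{j=1}^s 2^{-j}=2^{-s}$, so $G(1/2)=2^{s-1}$. As a sanity check, this recovers identity \eqref{eq:identity} of Theorem~\ref{thm:distribution}.

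For the derivative, write
\[
  G'(x)=\frac{D(x)+\sum_{j=1}^{s} j x^{j}}{D(x)^2}.
\]
We use the finite sum $\sum_{j=1}^s j2^{-j}=2-(s+2)2^{-s}$. The numerator at $1/2$ is then $2-(s+1)2^{-s}$. Therefore
\[
  G'(1/2)=4^s\bigl(2-(s+1)2^{-s}\bigr),\qquad \tfrac12 G'(1/2)=2^{2s}-(s+1)2^{s-1}.
\]

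Finally, substituting both values gives
\[
  \mathbb{E}[X]=\frac{2^{2s}-(s+1)2^{s-1}+(s-2)2^{s-1}}{2^{s-1}}=2^{s+1}-3 .
\]
The special values $5$, $13$, $29$ follow by taking $s=2,3,4$.
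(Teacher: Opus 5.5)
Your proof is correct, and it takes a genuinely different route from the paper. The paper never sums the series. It splits $X=(s-1)B+sM+R$, with $B$ geometric, $M+1$ geometric of parameter $1-2^{-s}$, and a restart rule for $R$, and then solves the linear equation \eqref{eq:linear} for $\mathbb{E}[X]$. You instead take the law from Theorem~\ref{thm:distribution} as given and evaluate $\sum_{m\ge1}(m+s-2)U_m2^{-m}$ through $G(x)=x/(1-x-\cdots-x^s)$. Every step checks out: $D(1/2)=2^{-s}$, the numerator of $G'(1/2)$ equals $2-(s+1)2^{-s}$, and the total is $4^s-3\cdot2^{s-1}$. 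Your convergence check can even be shortened: $|x+\cdots+x^s|\le 1-2^{-s}<1$ for $|x|\le\tfrac12$, so $D$ has no zero on that closed disc. Your route is mechanical, ties the answer directly to the multibonacci recurrence, recovers \eqref{eq:identity} as a by-product, and gives higher moments by differentiating again. The paper's route aims at a purely probabilistic explanation that does not need the distribution. As printed, however, it does not deliver the value: at $s=2$, \eqref{eq:linear} reads $\mathbb{E}[X]=2+\tfrac23+\tfrac12+\tfrac12\mathbb{E}[X]$, which gives $19/3$ rather than $5$. The decomposition can be made to work, but only after three corrections. First, for $s\ge3$ the first term must be the waiting time for $s-1$ consecutive black cells, whose mean is $2^s-2$, not $2(s-1)$. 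Second, the restart probabilities must be $2^{-(j+1)}/(1-2^{-s})$ rather than $2^{-j}$, because the next $s$-block is conditioned not to be all black. Third, a restart after $j$ black cells and a white one contributes $j+1+X'$, not $j+X'$. So, as the paper stands, your generating-function computation is what actually certifies $\mathbb{E}[X]=2^{s+1}-3$.
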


\begin{proof}
We express $X$ as the sum of three random variables:
\begin{equation}\label{eq:decomp}
  X = (s-1)B + sM + R.
\end{equation}

\textbf{Variable $B$.}
$B$ is a geometric random variable with success probability $1/2$,
so $\mathbb{E}[B] = 2$. The term $(s-1)B$ gives the position of
the end of the first run of $s-1$ consecutive black cells.

\textbf{Variable $M$.}
After the first run of $s-1$ black cells, $M$ counts the number of
complete groups of $s$ consecutive black cells that immediately
follow. The random variable $M+1$ follows a geometric distribution
with parameter $1 - 1/2^s$, giving
$\mathbb{E}[M] = 2^s/(2^s-1) - 1 = 1/(2^s-1)$
and $s\,\mathbb{E}[M] = s/(2^s-1)$.

\textbf{Variable $R$.}
After the first run and the $M$ groups, the next $s$ cells
(positions $(s-1)B+sM+1$ to $(s-1)B+sM+s$) are coloured
independently, giving $2^s$ equally likely outcomes:
\begin{itemize}[leftmargin=2em,itemsep=2pt]
  \item With probability $1/2$: cell $(s-1)B+sM+1$ is white,
        $R = 0$, and a string of length $sk+(s-1)$ ends here.
  \item With probability $1/2^j$ ($j = 1, \ldots, s-1$): the
        first $j$ of these cells are black and the $(j+1)$-th is
        white. The process then restarts, contributing
        $j + \mathbb{E}[X]$ to $R$.
\end{itemize}
By linearity of expectation:
\[
  \mathbb{E}[R]
  = \sum_{j=1}^{s-1} \frac{1}{2^j}\bigl(j + \mathbb{E}[X]\bigr).
\]
Substituting into
$\mathbb{E}[X] = (s-1)\mathbb{E}[B] + s\,\mathbb{E}[M] + \mathbb{E}[R]$:
\begin{equation}\label{eq:linear}
  \mathbb{E}[X]
  = 2(s-1) + \frac{s}{2^s-1}
    + \sum_{j=1}^{s-1}\frac{j}{2^j}
    + \mathbb{E}[X]\sum_{j=1}^{s-1}\frac{1}{2^j}.
\end{equation}
Since $\sum_{j=1}^{s-1}1/2^j = 1 - 1/2^{s-1}$, rearranging gives
\[
  \mathbb{E}[X] \cdot \frac{1}{2^{s-1}}
  = 2(s-1) + \frac{s}{2^s-1} + \sum_{j=1}^{s-1}\frac{j}{2^j}.
\]
Using the finite sum $\sum_{j=1}^{s-1} j/2^j = 2 - (s+1)/2^{s-1}$
and simplifying yields $\mathbb{E}[X] = 2^{s+1}-3$.
\end{proof}

\begin{corollary}[Second moment and variance]\label{cor:variance}
Let
\begin{align*}
  A(t) &= (2t-1)(3t^2-7t+6), \\
  B(t) &= 2(t-1)(4t^3-22t^2+29t-12), \\
  C(t) &= (t-1)^2(2t-1)(2t^2-9t+18).
\end{align*}
Then
\begin{equation}\label{eq:EX2}
  \mathbb{E}[X^2]
  = \frac{s^2\,A(2^s)+s\,B(2^s)+C(2^s)}{2\,(2^s-1)^2},
\end{equation}
and $\mathrm{Var}(X) = \mathbb{E}[X^2] - (2^{s+1}-3)^2$.
\end{corollary}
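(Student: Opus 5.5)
The plan is to bypass the renewal decomposition used in the proof of Theorem~\ref{thm:expectation}, which does not square cleanly because of the restart term. Instead I will work directly with the probability generating function $G(z)=\sum_{n\ge s-1}P(X=n)\,z^n$, built from the exact distribution in Theorem~\ref{thm:distribution}. The second moment then follows from
\[
\mathbb{E}[X^2]=G''(1)+G'(1).
\]

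\textbf{Step 1: closed form of $G$.} By the tiling interpretation, $\sum_{m\ge 0}U_{m+1}x^m = 1/Q(x)$ with $Q(x)=1-\sum_{j=1}^{s}x^j$. Substitute $m=n-s+1$ into \eqref{eq:proba} and put $y=z/2$. This gives
\[
G(z)=\frac12\cdot\frac{y^{s-1}}{Q(y)},\qquad y=\frac z2 .
\]
As a sanity check, at $z=1$ (so $y=\tfrac12$) one has $Q(\tfrac12)=2^{-s}$, and $G(1)=\tfrac12\cdot 2^{-(s-1)}\cdot 2^{s}=1$, which reproduces \eqref{eq:identity}.

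\textbf{Step 2: derivatives at $y=\tfrac12$.} Write $G=\tfrac12 N/Q$ with $N(y)=y^{s-1}$. Each $z$-derivative contributes a factor $\tfrac12$. By the quotient rule, $G'(1)$ and $G''(1)$ are expressed through $N,N',N''$ and $Q,Q',Q''$ evaluated at $y=\tfrac12$. The needed values are:
\begin{itemize}[leftmargin=2em]
\item $N(\tfrac12)=2^{1-s}$, $N'(\tfrac12)=(s-1)2^{2-s}$ and $N''(\tfrac12)=(s-1)(s-2)2^{3-s}$;
\item $Q'(\tfrac12)=-\sum_{j=1}^s j2^{1-j}$ and $Q''(\tfrac12)=-\sum_{j=2}^s j(j-1)2^{2-j}$.
\end{itemize}
Both sums have closed forms, linear and quadratic in $s$ times $2^{-s}$, plus constants. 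They follow from differentiating the finite geometric sum, as in the identity $\sum_{j<s} j/2^j=2-(s+1)/2^{s-1}$ already used in the paper.

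Before going further I will check that $G'(1)=2^{s+1}-3$, which must agree with Theorem~\ref{thm:expectation}. This confirms the normalisations and the factor-$\tfrac12$ bookkeeping.

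\textbf{Step 3: algebraic simplification (main obstacle).} The expression $G''(1)+G'(1)$ is a rational function of $s$ and $t=2^s$. Its denominator comes from powers of $Q(\tfrac12)$ and the $t$-dependence of the sums. I will collect it over the common denominator $2(t-1)^2$ and sort the numerator by powers of $s$, aiming to match $s^2A(t)+sB(t)+C(t)$.

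The factor $(t-1)^2$ is suspicious, since $Q(\tfrac12)=1/t$ yields denominators in powers of $t$ rather than $t-1$. So I expect cancellations, or else a mismatch to be diagnosed.

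To control the bookkeeping, I will test the final formula numerically before trusting it. For $s=2$ I will compare against the Fibonacci case of \cite{Benjamin2000}, and for $s=3$ against \cite{Zeggada2024}. I will also compare against a direct truncated evaluation of $\sum n^2U_{n-s+2}/2^{n+1}$.

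If the $(t-1)^2$ form does not emerge, I will fall back to a first-step analysis on the Markov chain that tracks the current black-run length modulo $s$. Its state space is $\{\text{white},1,\dots,s\}$, and the approach solves linear equations for the first and second moments of the hitting time. This is where denominators $2^s-1$ arise naturally, as in the variable $M$ of the proof of Theorem~\ref{thm:expectation}.

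The variance statement is then immediate from Theorem~\ref{thm:expectation}.
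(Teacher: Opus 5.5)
Your method is sound: the closed form $G(z)=\tfrac12\,y^{s-1}/Q(y)$ and the derivative bookkeeping are correct. But Step~3 cannot succeed, because \eqref{eq:EX2} is false. There is no cancellation to find. The Markov-chain fallback will not help either: it computes moments of the same random variable, whose law Theorem~\ref{thm:distribution} already fixes, and that law is correct.

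Carried through, your computation disproves the corollary. Using $(1-y)Q(y)=1-2y+y^{s+1}$,
\[
G(z)=\frac{y^{s-1}(1-y)}{2\,(1-2y+y^{s+1})},\qquad y=\tfrac z2 .
\]
With $\ell=\log G$ as a function of $y$, one finds $\ell'(\tfrac12)=2^{s+2}-6$ and $\ell''(\tfrac12)=2^{2s+4}-(s+1)2^{s+4}+4$. Since $G'(1)=\tfrac12\ell'(\tfrac12)$ and $G''(1)=\tfrac14\bigl(\ell'(\tfrac12)^2+\ell''(\tfrac12)\bigr)$, putting $t=2^s$ gives
\[
\mathbb{E}[X]=2t-3,\qquad \mathbb{E}[X^2]=8t^2-(4s+14)t+7,\qquad \mathrm{Var}(X)=4t^2-(4s+2)t-2 .
\]
This is a polynomial in $t$, linear in $s$, with no $(t-1)$ in any denominator. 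The right-hand side of \eqref{eq:EX2} instead grows like $2t^3$.

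Concretely, for $s=2$ you get $\mathbb{E}[X^2]=47$ and $\mathrm{Var}(X)=22$, equivalently $\sum_{n\ge1}n^2F_n/2^n=94$. The corollary claims $853/9$ and $628/9$. A three-state first-step analysis (after a white cell, inside an odd black run, inside an even black run) gives $47$ independently. For $s=3$ and $s=4$ the correct pairs are $(311,142)$ and $(1575,734)$. So the honest outcome of your plan is that Corollary~\ref{cor:variance}, and with it Corollaries~\ref{cor:fib}--\ref{cor:quad}, is wrong. Say that, rather than hedging about a mismatch to be diagnosed.

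Your suspicion about $(t-1)^2$ points exactly at the error in the paper's argument. The paper squares the decomposition $X=(s-1)B+sM+R$ from the proof of Theorem~\ref{thm:expectation}. The factor $(2^s-1)^2$ enters only through the moments of the geometric variable $M$ in $W=(s-1)B+sM$.

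That decomposition does not describe $X$:
\begin{itemize}
\item The restart probabilities $\tfrac12$ and $2^{-j}$, $1\le j\le s-1$, are not the conditional probabilities given that the next $s$-block is not all black. For $s\ge3$ they even sum to $\tfrac32-2^{1-s}>1$.
\item Equation \eqref{eq:linear} at $s=2$ already gives $\mathbb{E}[X]=19/3$, not $5$.
\item The second-moment step adds further slips: $S_3$ should be $6-(s^2+2s+3)/2^{s-1}$, and the term $(\mathbb{E}[X])^2S_2$ should not appear.
\end{itemize}
Inserting $\mathbb{E}[X]=5$ into that flawed scheme at $s=2$ reproduces exactly $853/9$, which confirms this is the source of the formula.

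The value $\mathbb{E}[X]=2^{s+1}-3$ itself is correct, as your check of $G'(1)$ will show. So the identity $\mathrm{Var}(X)=\mathbb{E}[X^2]-(2^{s+1}-3)^2$ stands; only the closed form for $\mathbb{E}[X^2]$, and hence the variance values, must be replaced.
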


\begin{corollary}[Fibonacci case, $s=2$]\label{cor:fib}
$\mathbb{E}[X^2] = 853/9$ and $\mathrm{Var}(X) = 628/9$.
\end{corollary}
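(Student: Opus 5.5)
The plan is to specialise Corollary~\ref{cor:variance} to $s=2$, so that $t=2^s=4$, and then check the answer independently, since the general formula~\eqref{eq:EX2} is the step most likely to contain an error.

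\textbf{Evaluating the polynomials at $t=4$.}
\begin{align*}
A(4) &= 7\,(48-28+6) = 7\cdot 26 = 182,\\
B(4) &= 2\cdot 3\,(256-352+116-12) = 6\cdot 8 = 48,\\
C(4) &= 9\cdot 7\,(32-36+18) = 63\cdot 14 = 882.
\end{align*}

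\textbf{Substituting into~\eqref{eq:EX2}.} The numerator is $s^2A(4)+sB(4)+C(4)$ with $s=2$:
\[
4\cdot 182+2\cdot 48+882 = 728+96+882 = 1706 .
\]
The denominator is $2(4-1)^2=18$. Hence $\mathbb{E}[X^2]=1706/18=853/9$.

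\textbf{Variance.} Theorem~\ref{thm:expectation} gives $\mathbb{E}[X]=2^{3}-3=5$. Therefore
\[
\mathrm{Var}(X)=\frac{853}{9}-25=\frac{853-225}{9}=\frac{628}{9}.
\]

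\textbf{Independent check via generating functions.} For $s=2$ the multibonacci numbers are the Fibonacci numbers, with $U_1=U_2=1$. Theorem~\ref{thm:distribution} then reads $P(X=n)=F_n/2^{n+1}$ for $n\ge1$. Put $G(x)=\sum_{n\ge1}F_nx^n=x/(1-x-x^2)$. Then
\[
\mathbb{E}[X^2]=\tfrac12\Bigl[\bigl(x\tfrac{d}{dx}\bigr)^2G\Bigr]_{x=1/2}.
\]
The series converges at $x=1/2$ because $1/2$ is smaller than the radius $1/\varphi$. I would differentiate $G$ twice and evaluate at $x=1/2$, where $1-x-x^2=1/4$.

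The same computation gives consistency checks on lower moments. At first order, $G(1/2)=2$ recovers~\eqref{eq:identity}, and $\tfrac12\,xG'(x)$ at $x=1/2$ should equal $5$. The second-order value should reproduce $853/9$.

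\textbf{Main obstacle.} The only real risk is in the derivation of~\eqref{eq:EX2} itself, not in the arithmetic. If the generating-function value disagreed with $853/9$, I would trust the direct series computation for the Fibonacci case. I would then re-derive the second moment from the decomposition~\eqref{eq:decomp}. That means squaring $X=(s-1)B+sM+R$ while tracking that the restart in $R$ makes $R$ depend on an independent copy of $X$. The second moment then satisfies a linear equation analogous to~\eqref{eq:linear}.
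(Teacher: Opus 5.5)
Your specialisation of \eqref{eq:EX2} at $t=4$ is arithmetically correct: $A(4)=182$, $B(4)=48$, $C(4)=882$, giving $1706/18=853/9$. That is also exactly how the paper obtains this corollary. However, the step is only as good as \eqref{eq:EX2} itself. The one non-circular step in your plan, the generating-function check, is only announced (``should reproduce $853/9$''), never carried out. Carrying it out refutes the statement.

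With $D(x)=1-x-x^2$ one has $G'(x)=(1+x^2)/D^2$ and $G''(x)=2x/D^2+2(1+x^2)(1+2x)/D^3$. At $x=1/2$, where $D=1/4$, this gives $G'(1/2)=20$ and $G''(1/2)=16+320=336$. Hence
\[
\mathbb{E}[X]=\tfrac12\cdot\tfrac12\cdot 20=5,\qquad
\mathbb{E}[X^2]=\tfrac12\bigl(\tfrac12\cdot 20+\tfrac14\cdot 336\bigr)=\tfrac12(10+84)=47 .
\]
The first-moment check passes, but $\mathbb{E}[X^2]=47$ and $\mathrm{Var}(X)=22$, not $853/9$ and $628/9$.

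A two-state chain on the parity of the current black run confirms this independently. The white cell closing the first odd run sits at $X+1$, with $\mathbb{E}[X+1]=6$ and $\mathbb{E}[(X+1)^2]=58$, so again $\mathbb{E}[X^2]=47$. The corollary therefore contradicts Theorem~\ref{thm:distribution} and cannot be proved. The error lies in \eqref{eq:EX2}, and your own plan would have caught it had you finished the computation.

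Your fallback of re-deriving the second moment from \eqref{eq:decomp} is the right repair. The paper's version of that decomposition cannot be squared as it stands, for two reasons:
\begin{itemize}
\item After the $M$ groups, the next block is by definition not all black. So the outcome probabilities must be $2^{-1}$ and $2^{-(j+1)}$ divided by $1-2^{-s}$, not $1/2$ and $1/2^j$.
\item A restart adds $j+1$ (the closing white cell included), not $j$.
\end{itemize}

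For $s=2$ the correct version is $X=B+2M+R$ with $B,M,R$ independent. Here $B$ is geometric on $\{1,2,\dots\}$ with parameter $1/2$, and $P(M=m)=\tfrac34\,4^{-m}$. Finally, $R=0$ with probability $2/3$ and $R=2+X'$ with probability $1/3$. This gives $\mathbb{E}[X]=2+\tfrac23+\tfrac13(2+\mathbb{E}[X])$, i.e. $\mathbb{E}[X]=5$, and
\[
\mathrm{Var}(X)=2+4\cdot\tfrac49+\Bigl(\tfrac{98}{9}+\tfrac{\mathrm{Var}(X)}{3}\Bigr),
\]
so $\mathrm{Var}(X)=22$, in agreement with the series.
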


\begin{corollary}[Tribonacci case, $s=3$]\label{cor:trib}
$\mathbb{E}[X^2] = 54840/49$ and $\mathrm{Var}(X) = 46559/49$.
\end{corollary}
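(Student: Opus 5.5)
The plan is to specialise Corollary~\ref{cor:variance} to $s=3$, so that $t=2^s=8$, and then evaluate the resulting rational numbers. No new probabilistic input is needed. The only ingredients are formula~\eqref{eq:EX2} for $\mathbb{E}[X^2]$ and Theorem~\ref{thm:expectation}, which gives $\mathbb{E}[X]=2^{4}-3=13$.

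\textbf{Step 1: evaluate the three polynomials at $t=8$.}
\begin{align*}
A(8) &= 15\cdot(192-56+6) = 15\cdot 142 = 2130,\\
B(8) &= 2\cdot 7\cdot(2048-1408+232-12) = 14\cdot 860 = 12040,\\
C(8) &= 49\cdot 15\cdot(128-72+18) = 735\cdot 74 = 54390.
\end{align*}

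\textbf{Step 2: form the numerator and denominator of~\eqref{eq:EX2}.} With $s^2=9$ and $s=3$, the numerator is
\[
9\cdot 2130+3\cdot 12040+54390 = 19170+36120+54390 = 109680 .
\]
The denominator is $2(8-1)^2=98$. Hence
\[
\mathbb{E}[X^2]=\frac{109680}{98}=\frac{54840}{49}.
\]

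\textbf{Step 3: compute the variance.} Since $13^2=169=8281/49$,
\[
\mathrm{Var}(X)=\frac{54840-8281}{49}=\frac{46559}{49}.
\]

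The only real obstacle is arithmetic reliability, because everything rests on Corollary~\ref{cor:variance}. As an independent cross-check, I would use Theorem~\ref{thm:distribution} with $s=3$, which gives $P(X=n)=U_{n-1}/2^{n+1}$ for tribonacci numbers $U_n$. Summing against the tribonacci generating function $\sum U_n x^n = x/(1-x-x^2-x^3)$ evaluated at $x=1/2$, and differentiating twice, yields $\mathbb{E}[X(X-1)]$ directly; adding $\mathbb{E}[X]=13$ then gives $\mathbb{E}[X^2]$. The two computations should agree on $54840/49$.
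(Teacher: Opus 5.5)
Your arithmetic is correct, and your main route is the paper's own: the corollaries are obtained by substituting into \eqref{eq:EX2}. At $t=8$ you correctly get $A(8)=2130$, $B(8)=12040$, $C(8)=54390$, and \eqref{eq:EX2} returns $54840/49$. The gap is that everything rests on \eqref{eq:EX2}, and that formula is false. So the statement cannot be proved as written, and the cross-check you propose would refute it rather than confirm it.

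Carry the cross-check out. For $s=3$, Theorem~\ref{thm:distribution} gives $P(X=n)=U_{n-1}/2^{n+1}$ for $n\ge 2$. With $G(x)=\sum_{m\ge1}U_mx^m=x/(1-x-x^2-x^3)$ one has $G(1/2)=4$, $G'(1/2)=96$, $G''(1/2)=4384$, hence
\[
\mathbb{E}[X^2]=\frac14\Bigl(x^2G''(x)+3xG'(x)+G(x)\Bigr)\Big|_{x=1/2}=\frac{1096+144+4}{4}=311 .
\]
The same data give $\mathbb{E}[X]=\frac14(xG'+G)|_{x=1/2}=13$, so $\mathrm{Var}(X)=311-169=142$.

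An independent check gives the same answer. Write $X-2=L_1+\cdots+L_N$ with $P(N=k)=\frac18\bigl(\frac78\bigr)^k$ and $P(L=i)=\frac87\,2^{-i}$ for $i=1,2,3$. Then $\mathrm{Var}(X)=\mathbb{E}[N]\,\mathrm{Var}(L)+\mathrm{Var}(N)\,\mathbb{E}[L]^2=7\cdot\frac{26}{49}+56\cdot\frac{121}{49}=142$.

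Even without computing, $54840/49$ is impossible. $G$ and its derivatives are integer polynomials divided by powers of $1-x-x^2-x^3$, which equals $\frac18$ at $x=\frac12$. So $\mathbb{E}[X^2]$ must be a dyadic rational, and $54840/49$ is in lowest terms with denominator $49$.

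The error enters in the derivation of \eqref{eq:EX2}. The decomposition $X=(s-1)B+sM+R$ does not describe $X$: its first-moment version \eqref{eq:linear} gives $\mathbb{E}[X]=152/7$ for $s=3$, not $13$. The second-moment step then adds further mistakes:
\begin{itemize}
\item With $R=j+X'$, one should have $\mathbb{E}[R^2]=S_3+2\,\mathbb{E}[X]S_1+\mathbb{E}[X^2]S_2$, with no $(\mathbb{E}[X])^2S_2$ term.
\item $S_3=\sum_{j=1}^{s-1}j^2/2^j=6-(s^2+2s+3)/2^{s-1}$, not $6-(s^2+2s-2)/2^{s-1}$.
\end{itemize}
So the correct tribonacci values are $\mathbb{E}[X^2]=311$ and $\mathrm{Var}(X)=142$. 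The generating-function computation you relegated to a cross-check is the argument that should actually be given. Done for general $s$, it yields
\[
\mathrm{Var}(X)=2^{s+2}-(s^2+3s+4)+(2^{s+1}-s-2)(2^{s+1}-s-1),
\]
for example $22$ for $s=2$ and $734$ for $s=4$. These are integers, in contrast with $628/9$ and $1993366/225$ from \eqref{eq:EX2}.
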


\begin{corollary}[Quadribonacci case, $s=4$]\label{cor:quad}
$\mathbb{E}[X^2] = 2182591/225$ and $\mathrm{Var}(X) = 1993366/225$.
\end{corollary}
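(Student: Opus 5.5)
The plan is a direct specialisation of Corollary~\ref{cor:variance} to $s=4$, combined with $\mathbb{E}[X]=2^{s+1}-3=29$ from Theorem~\ref{thm:expectation}. No new probabilistic input is needed: I take the closed form \eqref{eq:EX2} as given, put $t=2^4=16$, and evaluate the three polynomials at that point.

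First I evaluate the factors:
\begin{align*}
A(16) &= 31\cdot(768-112+6) = 31\cdot 662 = 20522,\\
B(16) &= 2\cdot 15\cdot(16384-5632+464-12) = 30\cdot 11204 = 336120,\\
C(16) &= 15^2\cdot 31\cdot(512-144+18) = 225\cdot 31\cdot 386 = 2692350.
\end{align*}
Then I form the numerator $s^2A+sB+C$ with $s=4$:
\[
16\cdot 20522 + 4\cdot 336120 + 2692350 = 328352+1344480+2692350 = 4365182 .
\]
The denominator is $2(2^4-1)^2=450$. Dividing out the common factor $2$ gives $\mathbb{E}[X^2]=4365182/450=2182591/225$.

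For the variance I subtract $(2^5-3)^2=29^2=841=189225/225$. This gives $\mathrm{Var}(X)=(2182591-189225)/225=1993366/225$.

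The only real obstacle is arithmetic reliability, since the corollary itself is just a substitution. As a safeguard I would cross-check $\mathbb{E}[X^2]$ independently, by computing the probability generating function of $X$ via the renewal decomposition $X=(s-1)B+sM+R$ used in the proof of Theorem~\ref{thm:expectation}. With $s=4$, I would differentiate that generating function twice at $1$. Alternatively, I would sum $n^2U_{n-2}/2^{n+1}$ numerically using \eqref{eq:proba} to confirm the value $\approx 9700.4$.
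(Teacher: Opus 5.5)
Your arithmetic is correct and reproduces the stated numbers, and specialising \eqref{eq:EX2} at $t=2^4$ is exactly how the paper obtains them. The gap is the step you take as given. \eqref{eq:EX2} is not the second moment of the distribution \eqref{eq:proba}, so the corollary as stated is false, and no substitution into \eqref{eq:EX2} can prove it. With $s=4$, \eqref{eq:proba} gives
\[
  \mathbb{E}[z^X]=\sum_{n\ge 3}\frac{U_{n-2}}{2^{n+1}}\,z^n=\frac{z^3}{D(z)},\qquad D(z)=16-8z-4z^2-2z^3-z^4,
\]
with $D(1)=1$, $D'(1)=-26$ and $D''(1)=-32$. Hence $(\log\mathbb{E}[z^X])'(1)=3+26=29=\mathbb{E}[X]$, in agreement with Theorem~\ref{thm:expectation}. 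Also $(\log\mathbb{E}[z^X])''(1)=-3+32+676=705=\mathrm{Var}(X)-\mathbb{E}[X]$, so $\mathrm{Var}(X)=734$ and $\mathbb{E}[X^2]=1575$, not $1993366/225$ and $2182591/225$. The same check at $s=2$, where $P(X=n)=F_n/2^{n+1}$, gives $\mathbb{E}[X^2]=47$, whereas \eqref{eq:EX2} gives $853/9$. The numerical cross-check you proposed, summing $n^2U_{n-2}/2^{n+1}$, converges to $1575$, not $9700.4$. You predicted its outcome instead of carrying it out, and doing it would have exposed the problem.

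The underlying defect is the decomposition $X=(s-1)B+sM+R$ from which \eqref{eq:EX2} is derived; it does not describe the colouring process:
\begin{enumerate}[label=(\roman*)]
  \item the waiting time for $s-1$ consecutive black cells has mean $2^s-2$, not $2(s-1)$;
  \item the branch probabilities $1/2$ and $1/2^j$ listed for $R$ add up to $3/2-2^{1-s}>1$ once $s\ge 3$;
  \item a failed attempt uses $j+1$ cells, not $j$.
\end{enumerate}
Indeed, \eqref{eq:linear} solved as written gives $\mathbb{E}[X]=19/3$ for $s=2$, not $5$. The value $2^{s+1}-3$ is right only because it also follows from \eqref{eq:proba}.

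A valid renewal description reads the cells before the final pattern as i.i.d.\ tiles: $X=(s-1)+L_1+\cdots+L_N$. Here $P(N=k)=(1-2^{-s})^k2^{-s}$ and, independently, $P(L=j)=2^{-j}/(1-2^{-s})$ for $j<s$ and $P(L=s)=2^{-s}/(1-2^{-s})$. For $s=4$ this gives $\mathbb{E}[N]=15$, $\mathrm{Var}(N)=240$, $\mathbb{E}[L]=26/15$ and $\mathrm{Var}(L)=194/225$. Then $\mathrm{Var}(X)=\mathbb{E}[N]\,\mathrm{Var}(L)+\mathrm{Var}(N)\,\mathbb{E}[L]^2=734$, which confirms the corrected values.
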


\begin{proof}
We apply the decomposition $X=(s-1)B+sM+R$ from
Theorem~\ref{thm:expectation}.
Since $R$ satisfies the restart property ($R=0$ with probability
$1/2$; $R=j+X'$ with probability $1/2^j$ for $j=1,\ldots,s-1$,
where $X'$ is an independent copy of $X$), we get
\[
  \mathbb{E}[R^2]
  = S_3 + 2\,\mathbb{E}[X]\,S_1
    + (\mathbb{E}[X])^2 S_2 + \mathbb{E}[X^2]\,S_2,
\]
where $S_1=2-(s+1)/2^{s-1}$, $S_2=1-1/2^{s-1}$, and
$S_3=6-(s^2+2s-2)/2^{s-1}$.
Setting $W=(s-1)B+sM$ and using
$\mathbb{E}[B^2]=6$, $\mathbb{E}[M^2]=(2^s+1)/(2^s-1)^2$,
and the independence of $B$ and $M$, one computes
$\mathbb{E}[X^2]=\mathbb{E}[W^2]+2\,\mathbb{E}[W]\,\mathbb{E}[R]
+\mathbb{E}[R^2]$.
Solving for $\mathbb{E}[X^2]$ (using $1-S_2=1/2^{s-1}$) yields
formula~\eqref{eq:EX2} of Corollary~\ref{cor:variance}.
\end{proof}

\end{document}